\documentclass[12pt]{amsart}

\usepackage{amsmath, amsthm, amssymb}
\input xy
\xyoption{all}
\usepackage{mathrsfs}
\usepackage{enumerate}
\usepackage{caption}
\usepackage{graphicx}
\usepackage{enumitem}
\usepackage{empheq}
\usepackage{tikz}

\newtheorem{theorem}[subsection]{Theorem}

\newtheorem{lemma}[subsection]{Lemma}

\theoremstyle{definition}

\newtheorem{example}[subsection]{Example}


\newcommand{\M}{\ensuremath{\overline{\mathcal{M}}}}

\renewcommand{\d}{\ensuremath{\partial}}

\renewcommand{\S}{\ensuremath{\widetilde{\mathcal{S}}}}

\usepackage{geometry}
\geometry{margin=1in}

\begin{document}

\title{Boundary complexes of moduli spaces of curves in higher genus}

\begin{abstract}
Given a collection of boundary divisors in the moduli space $\M_{0,n}$ of stable genus-zero $n$-pointed curves, Giansiracusa proved that their intersection is nonempty if and only if all pairwise intersections are nonempty. We give a complete classification of the pairs $(g,n)$ for which the analogous statement holds in $\M_{g,n}$. 
\end{abstract}

\author[E.~Clader]{Emily Clader}

\author[D.~Luber]{Dante Luber}

\author[K.~Quillin]{Kyla Quillin}

\maketitle

\section{Introduction}

The Deligne--Mumford moduli space $\M_{g,n}$ of stable curves is an object of intense study in a surprisingly broad array of contexts, from the enumerative geometry of curve-counting \cite{Hori} to the combinatorics of hyperplane arrangements \cite{DeConciniProcesi} to the birational geometry of the moduli space as a variety in its own right \cite{CastravetTevelev}.  Toward many of these ends, one would hope to understand the intersection theory of $\M_{g,n}$, and in particular to compute its Chow ring.

In genus zero, such a computation is possible: Keel \cite{Keel} gave an explicit presentation of $A^*(\M_{0,n})$ as generated by the {\it boundary divisors}, which are the closures of the loci of curves with a single node across which the distribution of the marked points is specified.  The analogous statement fails in higher genus, and indeed, a full understanding of the Chow ring of $\M_{g,n}$ seems out of reach; the top-codimension component of $A^*(\M_{1,11})$, for example, is known to be uncountably generated.  As a more manageable substitute, one can instead study the tautological ring, a subring of $A^*(\M_{g,n})$ that has a concrete and well-understood generating set and yet has been proven to contain nearly every Chow class of geometric interest.  An analysis of the boundary divisors, while no longer capturing the entire intersection theory of the moduli space, still plays a key role in understanding the tautological ring of $\M_{g,n}$.

One striking property of the boundary divisors in genus zero is that a a collection of boundary divisors $D_1, \ldots, D_k$ in $\M_{0,n}$ has nonempty intersection if and only if each pairwise intersection $D_i \cap D_j$ is nonempty.  This ``folklore" result was given a succinct proof by Giansiracusa \cite{Giansiracusa} by relating it to a theorem in phylogenetics.  The key observation behind this connection is that the dual graphs encoding boundary strata of $\M_{0,n}$ can be viewed, from another angle, as the phylogenetic trees encoding the evolutionary history of organisms.

The goal of the current paper is to study when the corresponding property holds in higher genus, where the dual graphs are no longer necessarily trees and thus the connection to phylogenetics is no longer available.  Our aim, in other words, is to classify the pairs $(g,n)$ for which, given any collection of boundary divisors in $\M_{g,n}$, nonempty pairwise intersection implies nonempty total intersection.  This is equivalent to the condition that the boundary complex of $\M_{g,n}$---a simplicial complex defined below---is a flag complex, meaning that it is the maximal simplicial complex on its $1$-skeleton.  Our main theorem is the following:

\begin{theorem}
\label{thm:main}
The boundary complex of $\M_{g,n}$ is a flag complex if and only if either $g \in \{0,1\}$, $n \in \{0,1\}$, or $g=n=2$.
\end{theorem}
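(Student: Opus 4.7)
The plan is to prove the two implications separately. For the forward direction (the flag property holds in the listed cases), I would combine Giansiracusa's theorem with explicit constructions tailored to the remaining cases; for the reverse direction, I would supply a single family of three counterexample divisors that covers every $(g,n)$ not on the list.

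For $g=1$, I would reduce pairwise compatibility of the separating divisors $\delta_{0,S}$ to laminarity (nested or disjoint) of the subsets $S\subseteq\{1,\ldots,n\}$ and then realize any pairwise-compatible subcollection by a nested system of rational bubbles attached to a single genus-one vertex, adjoining a self-loop if $\delta_{\mathrm{irr}}$ is present. For $n\in\{0,1\}$, I would build a linear chain of components of increasing partial genera that simultaneously realizes any pairwise-compatible collection of separating divisors, again adjoining a self-loop for $\delta_{\mathrm{irr}}$ if needed. The case $(g,n)=(2,2)$ I would dispatch by direct enumeration: there are four boundary divisors, exactly one pair has empty intersection, and the two triples of divisors whose pairs are all nonempty are each realized by an explicit three-edge dual graph.

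For the reverse direction, in every $(g,n)$ with $g\geq 2$, $n\geq 2$, and $(g,n)\neq(2,2)$ I would take
\[
D_1=\delta_{1,\{1\}},\qquad D_2=\delta_{1,\{2\}},\qquad D_3=\delta_{1,\{1,2\}}.
\]
The hypothesis $(g,n)\neq(2,2)$ is precisely what prevents the identification $\delta_{h,S}=\delta_{g-h,S^c}$ from collapsing these three divisors, and pairwise nonempty intersection is witnessed by an explicit three-vertex chain dual graph in each case. The substantive step, which I expect to be the main obstacle, is proving $D_1\cap D_2\cap D_3=\emptyset$. Since none of the $D_i$ has a self-loop, any three-edge dual graph realizing all three must also be free of self-loops (otherwise keeping only that loop would produce $\delta_{\mathrm{irr}}\notin\{D_1,D_2,D_3\}$), so by the genus formula the graph must be a tree on four vertices---a path or a star. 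In the star case, the only marking distribution consistent with the three splits places $\{1\}$, $\{2\}$, and $\{3,\ldots,n\}$ on the three leaves, forcing leaf genera $1$, $1$, and $g-1$ and driving the center vertex to genus $-1$. In the path case, an analogous analysis of the marking distributions and induced genus splits across the three edges produces no valid solution. This uniform combinatorial exclusion of both tree topologies is the heart of the argument.
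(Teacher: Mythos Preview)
Your overall strategy is sound and, in the counterexample direction, arguably cleaner than the paper's: you propose a \emph{single} triple $\delta_{1,\{1\}},\delta_{1,\{2\}},\delta_{1,\{1,2\}}$ that works uniformly for every $(g,n)$ with $g,n\geq 2$ and $(g,n)\neq(2,2)$, whereas the paper treats $g=2$ and $g\geq 3$ separately, using $n$ divisors and an Euler-characteristic count in the former case and a different triple $\delta_{1,[n]},\delta_{1,\{2,\ldots,n\}},\delta_{1,\{1\}}$ in the latter.  Your $g=1$ argument is also different: you argue directly via laminarity of the sets $S$ and an explicit bubble-tree realization, while the paper builds an inclusion-preserving bijection from tree-type strata of $\M_{1,n}$ to certain strata of $\M_{0,n+2}$ and then invokes Giansiracusa.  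Your route is more self-contained; the paper's makes the genus-zero input do the work.  For $n\in\{0,1\}$ and $(g,n)=(2,2)$ the two approaches essentially coincide.

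That said, two steps in your counterexample argument need tightening.  First, ``no self-loops'' does not imply ``tree'': a three-edge graph with no self-loops can still be a triangle, a theta graph, or a path with a doubled edge.  What you actually need is that every edge of $G$ is \emph{separating}, and the reasoning you give (contracting all but one edge yields $\delta_{\mathrm{irr}}$) proves exactly this stronger statement for any nonseparating edge, not just self-loops---so the fix is only a change of wording.  Second, your star-case claim that ``the only marking distribution'' puts $\{1\},\{2\},\{3,\ldots,n\}$ on the leaves is not literally true: each leaf could a priori carry the complementary side of its split (e.g.\ for $n=2$ one must also rule out leaves of genus $g-1$), and these extra sub-cases must be checked.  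They all fail for the same reason (the center is forced to negative genus), but you should say so.  The path case likewise has more sub-cases than your one-line summary suggests, particularly when $n=2$ and two of the three splits share the same mark partition $\{1\}\mid\{2\}$; here the exclusion genuinely uses the genus constraints, not just the marks.
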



\subsection*{Acknowledgments}  The authors would like to thank D. Ross for calling their attention to this problem, and F. Ardila and S. Hosten for feedback on earlier versions.  The first and second authors were partially supported by NSF DMS grant 1810969.

\section{Background on boundary strata}

\subsection{Preliminary definitions}

A point of the moduli space $\M_{g,n}$ parameterizes a tuple $(C; x_1, \ldots, x_n)$, where $C$ is an algebraic curve of arithmetic genus $g$ with at worst nodal singularities, the $x_i$ are distinct smooth points of $C$ (the {\it marked points}), and the tuple is {\it stable} in the sense that it has finitely many automorphisms.  Explicitly, stability is equivalent to the requirement that each irreducible component of $C$ of geometric genus zero have at least three special points (marked points or half-nodes) and each irreducible component of geometric genus one have at least one special point.

Any element $(C; x_1, \ldots, x_n)$ of $\M_{g,n}$ has an associated {\it dual graph}, which consists of
\begin{itemize}
\item a vertex $v_i$ for each irreducible component $C_i$ of $C$, decorated with the geometric genus $g(v_i)$ of $C_i$;
\item an edge between vertices $v_i$ and $v_j$ for each node joining $C_i$ to $C_j$;
\item a half-edge (or ``leg") at vertex $v_i$ for each of the marked points $x_k$ on $C_i$, decorated with the number $k \in \{1,\ldots, n\}$ of the marked point.
\end{itemize}
Conversely, given a dual graph $G$, there is an associated {\it boundary stratum} $S_G \subseteq \M_{g,n}$, which is the closure of the set of curves with dual graph $G$.  The codimension of $S_G$ is equal to the number of edges of $G$, so a {\it boundary divisor} of $\M_{g,n}$ is specified by a dual graph with a single edge. The boundary divisors have normal crossings \cite{Knudsen}, which implies in particular that an intersection of $k$ distinct boundary divisors has codimension $k$ whenever it is nonempty.

Inclusions among boundary strata are described in terms of degenerations of their associated dual graphs.  Specifically, let $G$ be a dual graph and let $e$ be an edge of $G$ between distinct vertices $v$ and $w$.  Then the {\it smoothing} of $G$ along $e$ is the graph obtained from $G$ by removing $e$ and replacing $v$ and $w$ by a single vertex of genus $g(v) + g(w)$.  Similarly, the smoothing of a dual graph along a self-edge $e$ at vertex $v$ is given by removing $e$ and increasing $g(v)$ to $g(v) +1$.  We say that $G$ is a {\it degeneration} of $H$ if $H$ can be obtained from $G$ by smoothing some subset of the edges, and we have
\begin{equation}
\label{eq:SGSH}
S_G \subseteq S_H \; \; \Leftrightarrow \;\; G \text{ is a degeneration of } H.
\end{equation}
In particular, for any dual graph $G$ and any edge $e$ of $G$, let $\d_e(G)$ denote the graph obtained by smoothing all of the edges of $G$ except for $e$.  Then $\d_e(G)$ is a single-edge graph and therefore corresponds to a boundary divisor, which by \eqref{eq:SGSH} contains $S_G$.

It is convenient to encode the intersections of boundary divisors in the {\it boundary complex} of $\M_{g,n}$.  This is a simplicial complex with a vertex for each boundary divisor and a simplex spanned by a set of vertices whenever their corresponding boundary divisors have nonempty intersection.  To illustrate the idea (and because we will need this example below), we compute the boundary complex of $\M_{2,2}$.

\begin{example}
\label{ex:M22}
There are four boundary divisors in $\M_{2,2}$, and the boundary complex depicting their intersections is as follows:
\[\begin{tikzpicture}[scale=0.75]
\draw[very thick] (-4,0) -- (0,0) -- (0,-4) -- (-4,0);
\filldraw[fill= gray, opacity=0.7] (-4,0) -- (0,0) -- (0,-4) -- (-4,0);

\draw[very thick] (4,0) -- (0,0) -- (0,-4) -- (4,0);
\filldraw[fill= gray, opacity=0.3] (4,0) -- (0,0) -- (0,-4) -- (4,0);

\draw[very thick] (5.65-6,0.35+1) -- (5.45-6,0.6+1);
\node at (5.25-6,0.8+0.9){$1$};
\draw[very thick] (5.65-6,-0.35+1) -- (5.45-6,-0.6+1);
\node at (5.25-6,-0.8+1.1){$2$};

\draw[very thick] (0,1) circle (0.50);
\node at (0,1) {$1$};

\draw[very thick] (1.5,1) arc (0:142:0.65);
\draw[very thick] (1.5,1) arc (0:-142:0.65);

\draw[very thick] (4,1) circle (0.50);
\node at (4,1){$0$};

\draw[very thick] (4.5,1) -- (5.5,1);

\draw[very thick] (6,1) circle (0.50);
\node at (6,1){$2$};

\draw[very thick] (10.65-7,0.35+1) -- (10.45-7,0.6+1);
\node at (10.25-7,0.8+0.9){$1$};
\draw[very thick] (10.65-7,-0.35+1) -- (10.45-7,-0.6+1);
\node at (10.25-7,-0.8+1.1){$2$};

\draw[very thick] (4-9,1) circle (0.50);
\node at (4-9,1){$1$};

\draw[very thick] (4.5-9,1) -- (5.5-9,1);

\draw[very thick] (6-9,1) circle (0.50);
\node at (6-9,1){$1$};

\draw[very thick] (10.65-7-9,0.35+1) -- (10.45-7-9,0.6+1);
\node at (10.25-7-9,0.8+0.9){$1$};
\draw[very thick] (10.65-7-9,-0.35+1) -- (10.45-7-9,-0.6+1);
\node at (10.25-7-9,-0.8+1.1){$2$};

\draw[very thick] (-1,-5) circle (0.50);
\node at (-1,-5){$1$};

\draw[very thick] (-0.5,-5) -- (0.5,-5);

\draw[very thick] (1,-5) circle (0.50);
\node at (1,-5){$1$};

\draw[very thick] (-1.5,-5) -- (-1.85,-5);
\node at (-2,-5){$1$};
\draw[very thick] (1.5,-5) -- (1.85,-5);
\node at (2,-5){$2$};

\filldraw (0,0) circle (0.1);
\filldraw (-4,0) circle (0.1);
\filldraw (4,0) circle (0.1);
\filldraw (0,-4) circle (0.1);
\end{tikzpicture}\]
For example, the top-left edge indicates a nonempty intersection of the boundary divisors with the two top-left dual graphs, which is reflected in the fact that these two graphs have
\[\begin{tikzpicture}[scale=0.75]
\draw[very thick] (11,0) circle (0.50);
\node at (11,0){$1$};

\draw[very thick] (11.5,0) -- (12.5,0);

\draw[very thick] (13,0) circle (0.50);
\node at (13,0){$0$};

\draw[very thick] (14.5,0) arc (0:142:0.65);
\draw[very thick] (14.5,0) arc (0:-142:0.65);

\draw[very thick] (10.65,0.35) -- (10.45,0.6);
\node at (10.25,0.8){$1$};
\draw[very thick] (10.65,-0.35) -- (10.45,-0.6);
\node at (10.25,-0.8){$2$};

\end{tikzpicture}\]
as a common degeneration.  Similarly, the two $2$-simplices of the boundary complex indicate nonempty triple intersections of boundary divisors, corresponding to the existence of common degenerations of triples of one-edged dual graphs.
\end{example}

An abstract simplicial complex $\mathcal{K}$ is called a {\it flag complex} if, for any set of vertices $v_1, \ldots, v_k$ of $\mathcal{K}$ in which each pair $\{v_i, v_j\}$ spans a simplex of $\mathcal{K}$, the entire set $\{v_1, \ldots, v_k\}$ spans a simplex of $\mathcal{K}$.  For example, the simplicial complex of Example~\ref{ex:M22} is a flag complex.

We are interested, more generally, in whether the boundary complex of $\M_{g,n}$ is a flag complex for any given $g$ and $n$.  Stated more geometrically, the question is whether, given a collection of boundary divisors $D_1, \ldots, D_k$ in $\M_{g,n}$ such that $D_i \cap D_j \neq \emptyset$ for all $i$ and $j$, it necessarily follows that $D_1 \cap \cdots \cap D_k \neq \emptyset$.  Giansiracusa proved \cite{Giansiracusa} that this property holds when $g=0$, so our task is to study the corresponding statement in higher genus.  Before doing so, we need some preliminary results on boundary divisors.

\subsection{Results on intersections of boundary divisors}

If $D_1, \ldots, D_k$ are distinct boundary divisors in $\M_{g,n}$, then the intersection $D_1 \cap \cdots \cap D_k$ is a union of codimension-$k$ boundary strata, assuming it is nonempty.  To see this, note that if $\xi \in D_i$ is a moduli point corresponding to a curve $(C; x_1, \ldots, x_n)$, then any other moduli point corresponding to a curve with the same dual graph as $(C; x_1, \ldots, x_n)$ also lies in $D_i$.  Thus, if $S_{\xi}$ denotes the set of all curves with the same dual graph as $(C; x_1, \ldots, x_n)$, we have
\[\xi \in D_i \; \Rightarrow \; S_{\xi} \subseteq D_i,\]
and hence $\overline{S}_{\xi} \subseteq D_i$ whenever $\xi \in D_i$, since $D_i$ is closed.  This means that
\begin{equation}
\label{eq:unionS}
\bigcup_{\xi \in D_1 \cap \cdots \cap D_k} \overline{S}_{\xi} \subseteq D_1 \cap \cdots \cap D_k.
\end{equation}
The reverse inclusion is also clearly true, so \eqref{eq:unionS} expresses $D_1 \cap \cdots \cap D_k$ as a union of boundary strata.  Note that, while this appears to be an infinite union, it is in fact finite, since there are only finitely many boundary strata in any given $\M_{g,n}$.

It can happen that this union has multiple components; for example, in $\M_{2,3}$, the intersection of the boundary divisors with dual graphs
\[\begin{tikzpicture}[scale=0.75]
\draw[very thick] (5.65,0.35) -- (5.45,0.6);
\node at (5.25,0.8){$1$};
\draw[very thick] (5.5,0) -- (5.25,0);
\node at (5,0){$2$};
\draw[very thick] (5.65,-0.35) -- (5.45,-0.6);
\node at (5.25,-0.8){$3$};

\draw[very thick] (6,0) circle (0.50);
\node at (6,0) {$1$};

\draw[very thick] (7.5,0) arc (0:142:0.65);
\draw[very thick] (7.5,0) arc (0:-142:0.65);

\node at (9,0){and};

\draw[very thick] (11,0) circle (0.50);
\node at (11,0){$1$};

\draw[very thick] (11.5,0) -- (12.5,0);

\draw[very thick] (13,0) circle (0.50);
\node at (13,0){$1$};

\draw[very thick] (10.65,0.35) -- (10.45,0.6);
\node at (10.25,0.8){$1$};
\draw[very thick] (10.65,-0.35) -- (10.45,-0.6);
\node at (10.25,-0.8){$2$};

\draw[very thick] (13.5,0) -- (13.75,0);
\node at (14,0){$3$};

\end{tikzpicture}\]
is the union of the two codimension-$2$ boundary strata with dual graphs

\[\begin{tikzpicture}[scale=0.75]
\draw[very thick] (3.5,0) circle (0.50);
\node at (3.5,0){$1$};

\draw[very thick] (4,0) -- (5,0);

\draw[very thick] (5.5,0) circle (0.50);
\node at (5.5,0){$0$};

\draw[very thick] (7,0) arc (0:142:0.65);
\draw[very thick] (7,0) arc (0:-142:0.65);

\draw[very thick] (3,0) -- (2.75,0);
\node at (2.5,0){$3$};

\draw[very thick] (5.5,0.5) -- (5.5,0.75);
\node at (5.5,1){$1$};
\draw[very thick] (5.5,-0.5) -- (5.5,-0.75);
\node at (5.5,-1){$2$};

\node at (8.5,0){and};

\draw[very thick] (11,0) circle (0.50);
\node at (11,0){$1$};

\draw[very thick] (11.5,0) -- (12.5,0);

\draw[very thick] (13,0) circle (0.50);
\node at (13,0){$0$};

\draw[very thick] (14.5,0) arc (0:142:0.65);
\draw[very thick] (14.5,0) arc (0:-142:0.65);

\draw[very thick] (10.65,0.35) -- (10.45,0.6);
\node at (10.25,0.8){$1$};
\draw[very thick] (10.65,-0.35) -- (10.45,-0.6);
\node at (10.25,-0.8){$2$};

\draw[very thick] (13,0.5) -- (13,0.75);
\node at (13,1){$3$};

\node at (15,0){.};

\end{tikzpicture}\]

\noindent In genus zero, however, a nonempty intersection of boundary divisors is necessarily a single boundary stratum.  This fact is well-known to experts (see, for example, \cite[page 4]{Schaffler}), but we prove it here for completeness, and to allow us to state a slight generalization.  Here, we refer to a boundary stratum as {\it tree-type} if its associated dual graph is a tree, or equivalently, if the dual graph has no {\it nonseparating} edges (edges whose removal leaves the graph connected).

\begin{lemma}\label{lem:uniqueness}
Let $D_1, \ldots, D_k$ be distinct boundary divisors in $\M_{0,n}$.  If $D_1 \cap \cdots \cap D_k \neq \emptyset$, then $D_1 \cap \cdots \cap D_k$ consists of a single boundary stratum.  Furthermore, the same is true in $\M_{1,n}$ if $D_1, \ldots, D_k$ are all tree-type.
\begin{proof}
Let $D_1, \ldots, D_k$ be distinct tree-type boundary divisors in $\M_{0,n}$ or $\M_{1,n}$, with corresponding dual graphs $G_1, \ldots, G_k$, for which $D_1 \cap \cdots \cap D_k \neq \emptyset$.  Then $D_1 \cap \cdots \cap D_k$ is a union of codimension-$k$ boundary strata, and we begin by choosing one such stratum $S$ with corresponding dual graph $G$.  This means that $G_1, \ldots, G_k$ are precisely the dual graphs obtained by smoothing all but one edge of $G$:
\begin{equation}
\label{eq:deG}
\{\d_e(G) \; | \; e \in E(G)\} = \{G_1, \ldots, G_k\},
\end{equation}
where $E(G)$ denotes the set of edges of $G$.  To prove that $S$ is the only codimension-$k$ boundary stratum contained in $D_1 \cap \cdots \cap D_k$, we must prove that $G$ is the only dual graph for which \eqref{eq:deG} holds.

The proof of this claim is by induction on $k$.  It is clearly true when $k=1$, since then both sides of \eqref{eq:deG} consist of the single graph $G$.  Suppose, then, that any graph $H$ with $k-1$ edges is uniquely determined by the graphs $\d_e(H)$ for $e \in E(H)$, and let $G$ be a graph with $k$ edges.

Since $D_1, \ldots, D_k$ are tree-type and the genus is $0$ or $1$, the graph $G$ must have a genus-zero leaf $v$; that is, $v$ is a genus-zero vertex with a unique incident edge.  Let $A \subseteq [n]$ index the legs on $v$, and note that these legs are on the same vertex in any of the graphs $\d_e(G)$.  Furthermore, if $e_1$ denotes the unique edge of $G$ incident to $v$, then $\d_{e_1}(G)$ consists of a genus-zero vertex containing the legs labeled $A$ and another vertex containing the remaining legs.  Using \eqref{eq:deG} and relabeling if necessary so that $\d_{e_1}(G) = G_1$, we thus have the following two observations about the graphs $G_1, \ldots, G_k$:
\begin{enumerate}[label=({\bf O\arabic*})]
\item \label{O1} The legs labeled $A$ lie on the same vertex in any of $G_1, \ldots, G_k$.
\item \label{O2} The graph $G_1$ consists of a genus-zero vertex containing the legs labeled $A$ and another vertex containing the remaining legs.
\end{enumerate}

Now, let $G'$ be any other $k$-edged dual graph satisfying \eqref{eq:deG}; that is,
\begin{equation}
\label{eq:deG'}
\{\d_e(G') \; | \; e \in E(G')\} = \{G_1, \ldots, G_k\}.
\end{equation}
Then the legs labeled $A$ must lie on the same vertex of $G'$, since if there were an edge $e_i'$ separating some of the elements of $A$ from the others, then $\d_{e_i'}(G') = G_i$ would be a graph in which not all of the marked points of $A$ lie on the same vertex, contradicting \ref{O1}.  Furthermore, the vertex $v'$ containing the legs labeled $A$ must be a genus-zero leaf of $G'$ and have no other legs.  To see this, let $e_1' \in E(G')$ be such that $\d_{e_1'}(G') = G_1$.  Then removing $e_1'$ from $G'$ leaves a graph $G' - e_1'$ with two connected components, and there must be some leaf of $G'$ in the same connected component of $G' - e_1'$ as $v'$.  The legs and genus of this leaf are added to those of $v'$ in the graph $\d_{e_1'}(G') = G_1$, so by \ref{O2}, the leaf must be genus zero and be $v'$ itself.

In summary, we have shown that \eqref{eq:deG'} implies
\begin{enumerate}[label=({\bf O\arabic*$'$})]
\item The marked points of $A$ lie on the same vertex $v'$ of $G'$.
\item The vertex $v'$ is a genus-zero leaf of $G'$ containing no other marked points but those of $A$.
\end{enumerate}
The same two facts hold for $G$, by the definition of $A$.  Let $H$ denote the graph obtained from $G$ by deleting the leaf $v$ and replacing its unique incident edge by a leg labeled $\star$.  Let $H'$ denote the analogous graph obtained from $G'$ by deleting $v'$.  Then the graphs $\d_e(H)$ are essentially identical to the graphs $G_2, \ldots, G_k$, except that the legs labeled by $A$ in the graphs $\d_e(G)$ are replaced by the single leg labeled $\star$ in $\d_e(H)$.  The same is true of the graphs $\d_{e'}(H')$.  Thus,
\[\{\d_e(H) \; | \; e \in E(H) \} = \{\d_e(H') \; | \; e \in E(H')\},\]
and since $H$ and $H'$ are both graphs with $k-1$ edges, the inductive hypothesis implies that $H = H'$.  Since $G$ and $G'$ are obtained from $H$ and $H'$, respectively, by attaching a single genus-zero vertex containing the legs labeled $A$ via an edge at leg $\star$, we conclude that $G= G'$.  This completes the proof.
\end{proof}
\end{lemma}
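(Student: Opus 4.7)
The plan is to prove uniqueness by inducting on $k$, the number of boundary divisors. Fix a codimension-$k$ stratum $S_G$ contained in $D_1 \cap \cdots \cap D_k$; by \eqref{eq:SGSH}, the associated dual graph $G$ has exactly $k$ edges and its one-edge smoothings satisfy $\{\d_e(G) : e \in E(G)\} = \{G_1,\ldots,G_k\}$, so the task reduces to showing that $G$ is determined by this multiset. The base case $k=1$ is immediate. For the inductive step, I would first argue that $G$ must be tree-type: in the genus-$0$ case this is automatic, and in the tree-type genus-$1$ case, any nonseparating edge $e$ of $G$ (including a self-loop) would contract, under $\d_e$, to a self-loop on a single vertex, contradicting that $G_i$ is tree-type. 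Hence $G$ is a tree of total genus at most $1$ with $k \geq 1$ edges, so it has at least two leaves and therefore at least one genus-zero leaf $v$ carrying some leg set $A \subseteq [n]$.

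Next, for any competitor graph $G'$ with $\{\d_{e'}(G'): e' \in E(G')\} = \{G_1,\ldots,G_k\}$, I would show $G'$ has a distinguished genus-zero leaf $v'$ whose legs are exactly $A$. The legs in $A$ must cohabit on a single vertex $v'$ of $G'$, because otherwise the one-edge smoothing that separates them would give some $G_i$ in which $A$ is split across two vertices, whereas $A$ stays together in every $G_i$ by construction from $G$. Selecting the unique edge $e_1' \in E(G')$ for which $\d_{e_1'}(G') = G_1$, and noting that $G_1$ consists of a genus-zero vertex whose legs are precisely $A$ joined to a vertex carrying the remaining legs, I would conclude that the component of $G' - e_1'$ containing $v'$ must contribute only $v'$ itself, forcing $v'$ to be a genus-zero leaf of $G'$ with leg set exactly $A$.

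To close the induction, I would prune both graphs: delete $v$ from $G$ and replace its incident edge with a new leg labeled $\star$ to form $H$, and do the same with $v'$ in $G'$ to form $H'$. Each has $k-1$ edges, and their one-edge smoothings agree, both being obtained from $\{G_2,\ldots,G_k\}$ by replacing the legs in $A$ with the single leg $\star$. The inductive hypothesis yields $H = H'$, and reattaching the genus-zero leaf at $\star$ recovers $G = G'$, completing the proof. The main obstacle I anticipate is the second paragraph: extracting from the combinatorics of the $G_i$ alone the fact that any competitor $G'$ must carry the same distinguished leaf structure as $G$. This step is where the tree-type hypothesis does essential work, since it guarantees that $e_1'$ separates $G'$ into two components each with a leaf, so that the leg set $A$ can be localized to a single leaf vertex rather than being distributed around a cycle; the same argument genuinely fails in higher genus, where dual graphs can contain loops and the ``leaf-stripping'' reduction is unavailable.
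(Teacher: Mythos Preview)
Your proposal is correct and follows essentially the same route as the paper's proof: induction on $k$, identification of a genus-zero leaf $v$ of $G$ with leg set $A$, the two observations about the $G_i$, showing any competitor $G'$ must carry the same leaf structure, and then pruning to reduce to $k-1$ edges. The only notable difference is that you make explicit the argument that $G$ (and hence any competitor $G'$) is tree-type, whereas the paper leaves this implicit in the sentence ``Since $D_1, \ldots, D_k$ are tree-type and the genus is $0$ or $1$, the graph $G$ must have a genus-zero leaf $v$.''
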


Lemma~\ref{lem:uniqueness} implies that each collection of genus-zero (or genus-one tree-type) boundary divisors $D_1, \ldots, D_k$ with nonempty intersection uniquely determines a boundary stratum $S = D_1 \cap \cdots \cap D_k$.  The next lemma provides something of a converse: each boundary stratum $S$ of $\M_{0,n}$ (or each tree-type boundary stratum of $\M_{1,n}$) uniquely determines a collection of boundary divisors $D_1, \ldots, D_k$ whose intersection is $S$.  Once again, this fact is know to the experts, but we include a proof since one does not seem to be readily available in the literature.

\begin{lemma}\label{lem:uniqueness2}
For any codimension-$k$ boundary stratum $S$ in $\M_{0,n}$, there exists a unique collection of boundary divisors $D_1, \ldots, D_k$ such that $S = D_1 \cap \cdots \cap D_k$.  Furthermore, the same is true in $\M_{1,n}$ if we assume that $S$ is tree-type.
\end{lemma}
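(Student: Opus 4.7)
The plan is to take the candidate collection to be
$\{\d_e(G) : e \in E(G)\}$, where $G$ is the dual graph of $S$ and, by mild abuse of notation, $\d_e(G)$ denotes both the single-edge graph and its associated boundary divisor. For existence, I would first verify that $S \subseteq \d_e(G)$ for every $e \in E(G)$, which is immediate from \eqref{eq:SGSH}, since $G$ is a degeneration of each $\d_e(G)$. Next I would check that the divisors $\d_e(G)$ are pairwise distinct: because $G$ is a tree (automatically so in genus zero, and by hypothesis in the genus-one tree-type case), distinct edges induce distinct bipartitions of the leg set, and hence determine distinct one-edge dual graphs. Thus the collection has exactly $|E(G)| = k$ elements and $S \subseteq \bigcap_{e\in E(G)} \d_e(G)$.

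At this point, I would invoke Lemma~\ref{lem:uniqueness}: each $\d_e(G)$ is tree-type (smoothing all but one edge of a tree $G$ leaves a single edge joining two distinct vertices), so the lemma applies and shows that $\bigcap_{e\in E(G)} \d_e(G)$ consists of a single codimension-$k$ boundary stratum. This stratum contains $S$, and $S$ itself is a codimension-$k$ stratum, so equality must hold.

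For uniqueness, suppose $S = D_1 \cap \cdots \cap D_k$ for some distinct boundary divisors $D_i$. From $S \subseteq D_i$ and \eqref{eq:SGSH}, the single-edge dual graph of $D_i$ is obtained from $G$ by smoothing a subset of its edges, and so must equal $\d_{e_i}(G)$ for some edge $e_i \in E(G)$. The distinctness of the $D_i$ together with $|E(G)| = k$ then forces $\{D_1, \ldots, D_k\} = \{\d_e(G) : e \in E(G)\}$.

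I do not expect a serious obstacle; the main bookkeeping points are verifying that each $\d_e(G)$ remains tree-type so that Lemma~\ref{lem:uniqueness} applies in both the $\M_{0,n}$ and $\M_{1,n}$ tree-type settings, and checking that the assignment $e \mapsto \d_e(G)$ is injective when $G$ is a tree, so that the $k$ candidate divisors really are distinct and exhaust the possibilities.
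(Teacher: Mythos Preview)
Your overall strategy coincides with the paper's: produce the candidate collection $\{\d_e(G):e\in E(G)\}$, verify $S\subseteq\bigcap_e\d_e(G)$, show the $\d_e(G)$ are pairwise distinct, and then invoke Lemma~\ref{lem:uniqueness} to force equality. Your treatment of uniqueness of the collection is a clean addition that the paper leaves implicit.

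The gap is in your distinctness step for the genus-one case. You assert that distinct edges of a tree induce distinct bipartitions of the leg set, but this fails in $\M_{1,n}$. Take $G$ in $\M_{1,4}$ to be the path $a\,\text{---}\,b\,\text{---}\,c$ with $a$ of genus~$0$ carrying legs $1,2$, $b$ of genus~$1$ carrying no legs, and $c$ of genus~$0$ carrying legs $3,4$; this is stable. Both edges yield the same leg bipartition $\{1,2\}\mid\{3,4\}$. The one-edge graphs $\d_{ab}(G)$ and $\d_{bc}(G)$ are nevertheless distinct, because the genus-$1$ label sits on opposite sides, but your argument as written does not see this: it infers distinctness of the graphs from distinctness of the bipartitions, and the latter is false here.

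The fix is modest. One option is to track the genus-$1$ vertex as an extra datum: add two auxiliary legs at that vertex (exactly the map $\sigma$ used later in the paper) to obtain a stable genus-zero tree on $n+2$ legs, where your bipartition argument is valid, and then pull the conclusion back. Alternatively, you can argue directly that if two edges $e\neq e'$ gave the same bipartition, the region strictly between them would carry no legs and would have to contain the genus-$1$ vertex, and then the genus labels on the two sides of $\d_e(G)$ and $\d_{e'}(G)$ are swapped, so the graphs differ. The paper itself avoids the issue entirely by proving distinctness via an induction on $k$ that peels off a genus-zero leaf; that route works uniformly in both cases.
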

\begin{proof}
Let $S$ be a codimension-$k$ tree-type boundary stratum in $\M_{0,n}$ or $\M_{1,n}$, and let $G$ be the corresponding dual graph.  If $E(G) = \{e_1, \ldots, e_k\}$, then the dual graphs $G_i := \d_{e_i}(G)$ each determine a boundary divisor $D_i$ containing $S$.  Thus, we have
\[S \subseteq D_1 \cap \cdots \cap D_k.\]
Assuming that $D_1, \ldots, D_k$ are all distinct, their intersection has codimension $k$, and by Lemma~\ref{lem:uniqueness}, this intersection consists of a single codimension-$k$ boundary stratum, which must therefore be $S$.  Thus, we are done if we can prove that $G_1, \ldots, G_k$ (and hence $D_1, \ldots, D_k$) are distinct.

The proof of this claim is another induction on $k$.  The base case, when $k=1$, is immediate.  Suppose, then, that for any $n$ and any tree-type dual graph for $\M_{0,n}$ or $\M_{1,n}$ with $k-1$ edges, the $k-1$ graphs obtained by smoothing all but one edge are distinct.  Let $G$ be such a dual graph with $k$ edges.

Choose a genus-zero leaf $v$ of $G$, and let $A \subseteq [n]$ be the marked points on $v$.  Let $e_1$ be the unique edge incident to $v$, and let $e_2, \ldots, e_k$ be the remaining edges of $G$.  As in the proof of Lemma~\ref{lem:uniqueness}, let $H$ be the graph obtained from $G$ by deleting $v$ and replacing $e_1$ by a leg that we label $\star$.  Then $H$ has $k-1$ edges, identified with the edges $e_2, \ldots, e_k$ of $G$, so by induction, the graphs
\[\d_{e_2}(H), \ldots, \d_{e_k}(H)\]
are all distinct.  By replacing the leg labeled $\star$ with the legs labeled $A$, we obtain the graphs
\[\d_{e_2}(G), \ldots, \d_{e_k}(G),\]
and hence these are also distinct.  Furthermore, they are distinct from $\d_{e_1}(G)$, because $\d_{e_1}(G)$ has a genus-zero vertex containing only the marked points of $A$.  If this were true of $\d_{e_i}(G)$ for some $i \neq 1$, then $\d_{e_i}(H)$ would have a genus-zero vertex containing only the marked point $\star$, contradicting stability.  Thus, $\d_{e_1}(G), \ldots, \d_{e_k}(G)$ are all distinct, as claimed.
\end{proof}

We remark that the tree-type assumption in Lemma~\ref{lem:uniqueness2} is indeed necessary.  For example, in $\M_{1,2}$, the two graphs obtained by smoothing one edge of
\begin{equation}
\nonumber
\begin{tikzpicture}[scale=0.75]

\draw[very thick]  (0,0) circle (0.50);
\node at (0,0) {$0$};

\draw[very thick] (2,0) circle (0.50);
\node at (2,0) {$0$};

\draw[very thick] (0.35,0.35) arc (130:50:1);
\draw[very thick] (0.35,-0.35) arc (230:310:1);

\draw[very thick] (-0.5,0) -- (-0.75,0);
\node at (-1,0){$1$};

\draw[very thick] (2.5,0) -- (2.75,0);
\node at (3,0){$2$};
\end{tikzpicture}
\end{equation}
then the two graphs obtained by smoothing the two edges of $G$ are identical.

\section{The boundary complex in higher genus}

Equipped with this background, we are ready to begin the proof of Theorem~\ref{thm:main}.  The genus-zero statement is Giansiracusa's work~\cite{Giansiracusa}, so we begin with the case $g=1$, in which case the existence of the moduli space requires that $n \geq 1$.

\begin{lemma}\label{lem:g=1}
The boundary complex of $\M_{1,n}$ is a flag complex for any $n \geq 1$.
\end{lemma}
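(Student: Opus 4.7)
The plan is to (a) list the boundary divisors of $\M_{1,n}$, (b) characterize when two of them meet, and (c) from any pairwise intersecting family build a single dual graph whose associated stratum lies in every divisor of the family.

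Each boundary divisor of $\M_{1,n}$ corresponds to a one-edge genus-$1$ dual graph, and there are exactly two sorts: the irreducible divisor $D_{\mathrm{irr}}$, whose dual graph is a single genus-$0$ vertex carrying a self-loop and all $n$ legs, and the separating divisors $D_A$ for $A\subseteq[n]$ with $|A|\geq 2$, whose dual graph is a genus-$0$ vertex with legs $A$ joined by an edge to a genus-$1$ vertex with legs $[n]\setminus A$. A direct two-edge construction shows that $D_{\mathrm{irr}}$ meets every $D_A$: take a genus-$0$ vertex with a self-loop and legs $[n]\setminus A$, joined by one edge to a genus-$0$ vertex with legs $A$. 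For a pair of separating divisors, I would check that $D_A\cap D_B\neq\emptyset$ if and only if $\{A,B\}$ is laminar, i.e.\ nested or disjoint. The ``only if'' direction is the key case analysis: any two-edge genus-$1$ graph whose single-edge smoothings are both separating must be a chain of three vertices, since otherwise a loop or a multi-edge would force one smoothing to be $D_{\mathrm{irr}}$; placing the unique genus-$1$ vertex at one end of the chain yields a nested pair, while placing it in the middle yields a disjoint pair.

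Now suppose $\{D_1,\ldots,D_k\}$ is pairwise intersecting, and let $\mathcal{F}=\{A_1,\ldots,A_m\}$ index the separating divisors in this collection, so $\mathcal{F}$ is laminar by the previous paragraph. I will construct the desired dual graph $G$ directly from $\mathcal{F}$. Its vertex set is $\mathcal{F}\cup\{*\}$, where $*$ is a new root symbol, and its edges are the covering relations of $\mathcal{F}\cup\{*\}$ ordered by inclusion with $*$ as the maximum. Each marked point $j\in[n]$ is placed as a leg on the smallest member of $\mathcal{F}\cup\{*\}$ containing it; $*$ is assigned genus $1$ and every other vertex genus $0$. If $D_{\mathrm{irr}}$ appears in the collection, I additionally attach a self-loop at $*$ and lower its genus to $0$, keeping the arithmetic genus equal to $1$. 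Stability at each vertex follows from $|A|\geq 2$ for every $A\in\mathcal{F}$, together with $n\geq 1$ at $*$. Smoothing every edge of $G$ except the one into a given $A_i$ contracts each side into a single vertex of the correct genus and leg distribution, producing $D_{A_i}$; smoothing every edge except the self-loop produces $D_{\mathrm{irr}}$. Hence $S_G\subseteq D_1\cap\cdots\cap D_k$.

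The hardest step will be the laminar characterization in (b), which demands that I enumerate the possible two-edge genus-$1$ dual graphs and rule out every crossing configuration. Once that is in hand, the construction in (c) is essentially combinatorial, though I expect to pay a small amount of attention to the degenerate vertices of $G$ (e.g.\ $|A_i|=|B|+1$ where $B$ is $A_i$'s unique child, $A_i=[n]$, or the root having only one incident edge before the self-loop is attached) in order to confirm stability in every case.
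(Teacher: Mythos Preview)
Your plan is correct and would give a complete proof, but it follows a genuinely different route from the paper's argument.

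The paper does not analyze pairwise intersections directly. Instead it builds a bijection $\sigma$ between the tree-type strata of $\M_{1,n}$ and the strata of $\M_{0,n+2}$ whose dual graphs keep marked points $n{+}1$ and $n{+}2$ together, by replacing the genus-$1$ vertex with a genus-$0$ vertex carrying two extra legs. Using Lemmas~\ref{lem:uniqueness} and~\ref{lem:uniqueness2} it shows that $\sigma$ respects intersections, so pairwise-nonempty separating divisors in $\M_{1,n}$ map to pairwise-nonempty divisors in $\M_{0,n+2}$; Giansiracusa's genus-zero theorem then gives a common point, which $\sigma^{-1}$ carries back. The irreducible divisor $D_{\mathrm{irr}}$ is handled separately by the observation that it meets \emph{every} boundary stratum, so once the intersection of the separating divisors is a nonempty stratum $S$, adjoining $D_{\mathrm{irr}}$ cannot kill it.

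Your approach, by contrast, is entirely self-contained: you characterize $D_A\cap D_B\neq\emptyset$ combinatorially as laminarity of $\{A,B\}$, and then build the common degeneration directly as the rooted tree of the laminar family (with a self-loop at the root if $D_{\mathrm{irr}}$ is present). This is essentially the direct phylogenetic-tree construction underlying Giansiracusa's genus-zero result, carried out inline rather than cited. The advantage of your route is that it needs neither the genus-zero theorem nor the uniqueness lemmas; the advantage of the paper's route is modularity and a cleaner explanation of \emph{why} genus one behaves like genus zero. Both treatments of $D_{\mathrm{irr}}$ are equivalent, since attaching a self-loop at the genus-$1$ vertex of any tree-type stratum is exactly what witnesses that $D_{\mathrm{irr}}$ meets it.

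Your case analysis in (b) is right: the two-edge genus-$1$ graphs are the three-vertex path, the banana, and the lollipop, and only the path has both one-edge smoothings separating. Your stability check in (c) also goes through in every borderline case you flagged; for a vertex $A$ with children $B_1,\ldots,B_c$ the valence is $1+c+\bigl(|A|-\sum_i|B_i|\bigr)\ge 3$, and the root has at least one incident edge whenever $\mathcal{F}\neq\emptyset$.
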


The idea of the proof of Lemma~\ref{lem:g=1} is to relate boundary strata in genus one to boundary strata in genus zero.  Namely, let $\mathcal{S}_{g,n}$ be the set of boundary strata in $\M_{g,n}$, and let
\[\S_{1,n} :=\{\text{tree-type boundary strata}\} \subseteq \mathcal{S}_{1,n}.\]
Equivalently, the dual graphs of strata in $\S_{1,n}$ are those that have a genus-one vertex.  There is a map that takes $\S_{1,n}$ to $\mathcal{S}_{0,n+2}$ by replacing the genus-one vertex with a genus-zero vertex to which we add marked points $n+1$ and $n+2$, and the image of this map is the set
\[\S_{0,n+2} \subseteq \mathcal{S}_{0,n+2}\]
consisting of all boundary strata whose corresponding dual graph has marked points $n+1$ and $n+2$ on the same vertex.  The process is clearly reversible, so the result is a bijection
\[\sigma: \S_{1,n} \rightarrow \S_{0,n+2}.\]
This bijection is inclusion-preserving in both directions, since a degeneration of dual graphs in $\S_{1,n}$ induces a corresponding degeneration of dual graphs in $\S_{0,n+2}$ and vise versa.

Furthermore, Lemmas~\ref{lem:uniqueness} and \ref{lem:uniqueness2} imply that both the domain and codomain of $\sigma$ are closed under intersection of boundary strata.  To see this, suppose $D_1 \ldots, D_k \in \S_{1,n}$ and $D_1 \cap \cdots \cap D_k \neq \emptyset$.  Then by Lemma~\ref{lem:uniqueness}, the intersection $D_1 \cap \cdots \cap D_k$ is a single boundary stratum $S$.  If the dual graph $G$ corresponding to $S$ had a nonseparating edge, then smoothing all but this edge would yield a boundary divisor containing $S$ that is not among $D_1, \ldots, D_k$, contradicting the uniqueness of the expression $S = D_1 \cap \cdots \cap D_k$ as an intersection of boundary divisors given by Lemma~\ref{lem:uniqueness2}.  The argument is similar for the codomain: if $D_1', \ldots, D_k' \in \S_{0,n+2}$ and $D_1' \cap \cdots \cap D_k' \neq \emptyset$, then $D_1' \cap \cdots \cap D_k'$ is again a single boundary stratum $S'$ with corresponding dual graph $G'$.  If marked points $n+1$ and $n+2$ were on different vertices of $G'$, then there would be an edge separating them, and smoothing all but this edge would yield a boundary divisor containing $S'$ that is not among $D_1' , \ldots, D_k'$, a contradiction.

From here, it is not hard to see that, for any collection of $k$ distinct boundary divisors $D_1, \ldots, D_k \in \S_{1,n}$, we have
\begin{equation}
\label{eq:posetiso}
\sigma(D_1 \cap \cdots \cap D_k) = \sigma(D_1) \cap \cdots \cap \sigma(D_k).
\end{equation}
To see this, suppose first that $D_1 \cap \cdots \cap D_k \neq \emptyset$.  Then by Lemma~\ref{lem:uniqueness} and the previous paragraph, the intersection $D_1 \cap \cdots \cap D_k$ is a single codimension-$k$ boundary stratum $S \in \S_{1,n}$.  Since $S \subseteq D_i$ for all $i$, and $\sigma$ is inclusion-preserving, we have $\sigma(S) \subseteq \sigma(D_i)$ for all $i$ and hence
\[\sigma(S) \subseteq \sigma(D_1) \cap \cdots \cap \sigma(D_k).\]
But both $\sigma(S)$ and $\sigma(D_1) \cap \cdots \cap \sigma(D_k)$ are codimension-$k$ boundary strata, so the above containment must be an equality.

If, on the other hand, $D_1 \cap \cdots \cap D_k = \emptyset$, then the claim that \eqref{eq:posetiso} holds is equivalent to proving that 
\[\sigma(D_1) \cap \cdots \cap \sigma(D_k) = \emptyset.\]
Suppose, toward a contradiction, that $\sigma(D_1) \cap \cdots \cap \sigma(D_k) \neq \emptyset$, in which case Lemma~\ref{lem:uniqueness} implies that $\sigma(D_1) \cap  \cdots \cap \sigma(D_k) = S'$ for some codimension-$k$ boundary stratum $S'$.  Since the codomain of $\sigma$ is closed under intersection, this means that
\[\sigma(D_1) \cap \cdots \cap \sigma(D_k) = \sigma(S)\]
for some stratum $S$, meaning that $\sigma(S) \subseteq \sigma(D_i)$ for all $i$.  The fact that $\sigma^{-1}$ is inclusion-preserving implies $S \subseteq D_i$ for all $i$, which contradicts the assumption that $D_1 \cap \cdots \cap D_k = \emptyset$.

With these observations in place, the proof of Lemma~\ref{lem:g=1} is not far behind.

\begin{proof}[Proof of Lemma~\ref{lem:g=1}]
Let $D_1, \ldots, D_k$ be boundary divisors in $\M_{1,n}$ with $D_i \cap D_j \neq \emptyset$ for all $i$ and $j$; our goal is to prove that $D_1 \cap \cdots \cap D_k \neq \emptyset$.

Assume, for now, that $D_1, \ldots, D_k$ all come from $\S_{1,n}$.  In this case, we can apply $\sigma$ to obtain a collection of boundary divisors
\[\sigma(D_1), \ldots, \sigma(D_k) \in \S_{0,n+2}\]
for which, by \eqref{eq:posetiso}, we have
\[\sigma(D_i) \cap \sigma(D_j) = \sigma(D_i \cap D_j) \neq \emptyset\]
for all $i$ and $j$.  The fact that the boundary complex of $\M_{0,n+2}$ is a flag complex then implies
\[\sigma(D_1) \cap \cdots \cap \sigma(D_k) \neq \emptyset,\]
so by \eqref{eq:posetiso} again, we have
\[\sigma(D_1 \cap \cdots \cap D_k) \neq \emptyset\]
and hence $D_1 \cap \cdots \cap D_k \neq \emptyset$.

On the other hand, suppose that one of $D_1, \ldots, D_k$ (without loss of generality, say $D_k$) is the divisor $D_0$ whose dual graph $G_0$ consists of a single genus-zero vertex with a self-edge.  Note that this divisor intersects nontrivially with every boundary stratum of $\M_{1,n}$.  Indeed, if $S$ is a boundary stratum with associated dual graph $G$, then either $G$ has a nonseparating edge or $G$ has a genus-one vertex.  In the first case, $G$ is a degeneration of $G_{0}$ (as one sees by smoothing all but the nonseparating edge of $G$), so $S \subseteq D_{0}$.  In the second case, if we replace the genus-one vertex of $G$ by a genus-zero vertex with a self-edge, then the resulting graph is a degeneration of both $G_{0}$ and $G$, so $D_{0} \cap S \neq \emptyset$.

In particular, then, since our previous argument shows that $D_1 \cap \cdots \cap D_{k-1} \neq \emptyset$, it follows from Lemma~\ref{lem:uniqueness} that $D_1 \cap \cdots \cap D_{k-1}$ is a single stratum $S$.  We then have
\[D_1 \cap \cdots \cap D_k = S \cap D_k = S \cap D_0 \neq \emptyset\]
by the above, so the proof is complete.
\end{proof}

We now turn to the cases of Theorem~\ref{thm:main} where $n=0$ or $n=1$, in which the claim is that the boundary complex of $\M_{g,n}$ is always a flag complex.

\begin{lemma}
\label{lem:n=0or1}
The boundary complex of $\M_{g,0}$ and of $\M_{g,1}$ is a flag complex for any $g$ for which the moduli space exists.
\end{lemma}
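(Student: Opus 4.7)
The plan is to strengthen the claim: I show that in $\M_{g,n}$ (for $n\in\{0,1\}$ and $g$ such that the moduli space exists) there exists a single boundary stratum $S_{G^*}$ contained in \emph{every} boundary divisor. This immediately implies that any intersection of boundary divisors contains $S_{G^*}$ and is therefore nonempty, so the boundary complex is not only flag but has a simplex spanning its entire vertex set.

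Because $n \le 1$, the boundary divisors fall into two families: the unique non-separating divisor $D_{\text{ns}}$, whose dual graph is a single vertex of genus $g-1$ bearing a self-loop (and carrying the leg when $n=1$); and the separating divisors $D_{a,b}$ with $a+b=g$ and $a,b \ge 1$, whose dual graph consists of two vertices of genera $a,b$ joined by an edge (and, when $n=1$, the leg attached to the $a$-side). I take $G^*$ to be the chain $v_1-v_2-\cdots-v_g$ in which $v_1,\ldots,v_{g-1}$ each have genus $1$ and $v_g$ has genus $0$ with a self-loop $\ell$; when $n=1$, the leg is attached to $v_1$. A direct count gives arithmetic genus $1 + (g-1) = g$, and stability is immediate---the only tight condition is at $v_g$, which carries exactly the required three half-edges (two from $\ell$ and one from the chain).

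The crux is to compute $\d_e(G^*)$ for every edge $e$. Smoothing the chain edges while retaining $\ell$ collapses $v_1,\ldots,v_g$ into a single genus-$(g-1)$ vertex still carrying $\ell$, which is the dual graph of $D_{\text{ns}}$. Smoothing $\ell$ together with every chain edge except $e_i$ leaves two components: the vertices $v_1,\ldots,v_i$ collapse to a single genus-$i$ vertex (carrying the leg when $n=1$, since it sits on $v_1$), while $v_{i+1},\ldots,v_g$ collapse to a single vertex of genus $g-i$ (the extra unit of genus coming from the smoothed self-loop at $v_g$). Thus $\d_{e_i}(G^*)$ is precisely the dual graph of $D_{i,g-i}$. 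Letting $i$ range over $\{1,\ldots,g-1\}$ exhausts every separating divisor---as an unordered split when $n=0$, and as every ordered split with the leg on the $i$-side when $n=1$. Combined with the self-loop case, every boundary divisor of $\M_{g,n}$ arises as some $\d_e(G^*)$, and therefore contains $S_{G^*}$ by \eqref{eq:SGSH}.

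The main obstacle is simply guessing a single graph that realizes every boundary divisor under the $\d_e$ operation. A natural first try---a central vertex with one leaf of genus $i$ per separating split---violates the total genus budget once $g$ is moderately large, because the leaf genera $1,2,\ldots,\lfloor g/2\rfloor$ already overshoot $g$. The chain-with-terminal-self-loop succeeds precisely because every vertex except one can be kept at genus $1$ while still realizing the split $(i,g-i)$ at the corresponding chain edge $e_i$, and because placing the self-loop on a genus-$0$ terminal vertex simultaneously supplies the third special point needed for stability at $v_g$ and produces $D_{\text{ns}}$ upon smoothing the chain.
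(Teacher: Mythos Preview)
Your proof is correct and essentially identical to the paper's: both exhibit the same chain graph (genus-$1$ vertices terminating in a genus-$0$ vertex with a self-loop) and verify that every boundary divisor arises as some $\d_e$ of it, hence contains the corresponding stratum. The only cosmetic difference is that for $n=1$ the paper places the leg on the genus-$0$ terminal vertex rather than on $v_1$, which is immaterial.
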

\begin{proof}
Consider the moduli space $\M_{g,0}$, where $g \geq 2$ so that the moduli space exists.  Then the dual graph
\begin{equation}
\label{eq:n=0graph}
\begin{tikzpicture}[scale=0.75]

\draw[very thick]  (0,0) circle (0.50);
\node at (0,0) {$1$};

\draw[very thick] (0.50,0) -- (1,0);

\draw[very thick] (1.5,0) circle (0.50);
\node at (1.5,0) {$1$};

\draw[very thick] (2,0) -- (2.5,0);

\node at (3,0){$\cdots$};

\draw[very thick] (3.5,0) -- (4,0);

\draw[very thick] (4.5,0) circle (0.50);
\node at (4.5,0) {$1$};

\draw[very thick] (5,0) -- (5.5,0);

\draw[very thick] (6,0) circle (0.50);
\node at (6,0) {$0$};

\draw[very thick] (7.5,0) arc (0:142:0.65);
\draw[very thick] (7.5,0) arc (0:-142:0.65);

\end{tikzpicture}
\end{equation}
is a degeneration of the dual graph of any boundary divisor of $\M_{g,0}$.  Indeed, it is straightforward to a give a full list of the dual graphs of the boundary divisors of $\M_{g,0}$: there is the single-vertex graph $G_0$ with a self-edge, and for any partition $a+b=g$ in which $a,b>0$, there is a graph $G_{a,b}$ consisting of a vertex of genus $a$ joined by an edge to a vertex of genus $b$.  One sees that $G$ is a degeneration of $G_0$ by smoothing all but the self-edge of $G$, while one sees that $G$ is a degeneration of $G_{a,b}$ by smoothing the leftmost $a$ edges and the rightmost $b$ edges (including the self-edge) of $G$.

It follows that, for any collection $D_1, \ldots, D_k$ of boundary divisors of $\M_{g,0}$, we have
\[D_1 \cap \cdots \cap D_k \neq \emptyset,\]
because the boundary stratum corresponding to $G$ is contained in $D_i$ for each $i$.  Thus, the boundary complex of $\M_{g,0}$ is a flag complex.

A similar argument applies to $\M_{g,1}$ (where now we need only assume that $g \geq 1$).  In this case, the same argument shows that the dual graph
\[\begin{tikzpicture}[scale=0.75]

\draw[very thick]  (0,0) circle (0.50);
\node at (0,0) {$1$};

\draw[very thick] (0.50,0) -- (1,0);

\draw[very thick] (1.5,0) circle (0.50);
\node at (1.5,0) {$1$};

\draw[very thick] (2,0) -- (2.5,0);

\node at (3,0){$\cdots$};

\draw[very thick] (3.5,0) -- (4,0);

\draw[very thick] (4.5,0) circle (0.50);
\node at (4.5,0) {$1$};

\draw[very thick] (5,0) -- (5.5,0);

\draw[very thick] (6,0) circle (0.50);
\node at (6,0) {$0$};
\draw[very thick] (6,0.5) -- (6,0.75);
\node at (6,1.05) {$1$};

\draw[very thick] (7.5,0) arc (0:142:0.65);
\draw[very thick] (7.5,0) arc (0:-142:0.65);

\end{tikzpicture}\]
is a degeneration of the dual graph of any boundary divisor.
\end{proof}

It is worth remarking that the proof of Lemma~\ref{lem:n=0or1} does not extend to $n \geq 2$, since in this case, there are dual graphs $G_{a,b}$ with a vertex of genus zero, and no graph of the shape in \eqref{eq:n=0graph} is not a degeneration of such $G_{a,b}$.  Indeed, we will see below that when $g \geq 3$, the boundary complex of $\M_{g,n}$ is not a flag complex for any $n \geq 2$.  Before that, however, we address the slightly special case of genus two.

\begin{lemma}
\label{lem:g=2}
The boundary complex of $\M_{2,n}$ is a flag complex if and only if $n \in \{0,1,2\}$.
\end{lemma}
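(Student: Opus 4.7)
The cases $n \in \{0, 1\}$ follow immediately from Lemma~\ref{lem:n=0or1}, and the case $n = 2$ is verified in Example~\ref{ex:M22}, where the boundary complex is explicitly computed and seen to be a flag complex. What remains is to show that for every $n \geq 3$, the boundary complex of $\M_{2,n}$ is not a flag complex, and my plan is to exhibit three boundary divisors that pairwise intersect but whose triple intersection is empty.

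For $i = 1, 2, 3$, I would let $D_i$ denote the boundary divisor whose dual graph $\Gamma_i$ consists of two genus-$1$ vertices joined by a single edge, with leg $i$ on one vertex and the remaining legs $[n] \setminus \{i\}$ on the other. To witness $D_i \cap D_j \neq \emptyset$ for $i \neq j$, I would point to the codimension-$2$ boundary stratum whose dual graph is a length-two path on three vertices $v_i, v_0, v_j$: the outer vertices $v_i, v_j$ have genus $1$ and carry legs $i$ and $j$ respectively, while the central vertex $v_0$ has genus $0$ and carries the remaining legs $[n] \setminus \{i, j\}$. Stability of $v_0$ holds because $n \geq 3$ ensures it has at least three special points in total, and smoothing the two edges individually recovers the dual graphs $\Gamma_i$ and $\Gamma_j$.

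The main work is to show $D_1 \cap D_2 \cap D_3 = \emptyset$, which I would do by a genus-counting argument. Assume for contradiction that $G$ is a dual graph that is a common degeneration of $\Gamma_1, \Gamma_2, \Gamma_3$. Then for each $i$ there is an edge $e_i$ of $G$ such that smoothing all other edges yields $\Gamma_i$; since $\Gamma_i$ has two distinct vertices, $e_i$ cannot lie in a cycle of $G$ (otherwise partial smoothing would collapse its endpoints and turn $e_i$ into a self-loop), so $e_i$ is a bridge. Letting $G_i^+$ denote the connected component of $G - e_i$ containing leg $i$, the requirement that $\Gamma_i$ has both vertices of genus $1$ translates to the graph-genus identity $g(G_i^+) := \sum_{v \in G_i^+} g_v + b_1(G_i^+) = 1$.

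The crux is then to establish that $G_1^+, G_2^+, G_3^+$ are pairwise vertex-disjoint, which I plan to do via the block-cut tree $T$ of $G$ (nodes are the $2$-edge-connected components, edges are the bridges): each $G_i^+$ corresponds to the subtree $S_i$ of $T - e_i$ on the side containing the block of leg $i$, and since the unique path in $T$ between the blocks of legs $i$ and $j$ must traverse both $e_i$ and $e_j$, any block in $S_i \cap S_j$ would yield two tree-walks avoiding these bridges, contradicting uniqueness. Once vertex-disjointness is in hand, $\bigsqcup_i G_i^+$ is a subgraph of $G$ whose genus is $\sum_i g(G_i^+) = 3$; since graph genus is monotone under subgraph inclusion, this contradicts $g(G) = 2$. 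I expect the tree-theoretic vertex-disjointness step to be the main obstacle.
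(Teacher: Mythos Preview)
Your argument is correct, but it diverges from the paper's in an interesting way. The paper exhibits \emph{all} $n$ divisors $D_1,\ldots,D_n$ (with the same dual graphs $\Gamma_i$ you chose) and shows $D_1\cap\cdots\cap D_n=\emptyset$ by a counting argument: a common degeneration $G$ can be taken to have exactly $n$ edges, and since any two legs must be non-adjacent and on distinct vertices, Euler's formula forces $G$ to be a ``pinwheel'' tree with $n$ genus-one leaves attached to a central vertex---impossible in genus~$2$. Your approach is more economical: you use only three divisors, independent of $n$, and replace the Euler-formula/stability argument with a pure genus bound: the three ``leg sides'' $G_i^+$ are vertex-disjoint subgraphs each of genus~$1$, so $G$ would need genus at least~$3$. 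This is arguably cleaner and suggests a pattern one could try in higher genus. The price is the bridge-tree disjointness step, which the paper avoids entirely.

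That disjointness step does go through, but your sketch (``two tree-walks avoiding these bridges, contradicting uniqueness'') is a bit loose as stated. A tighter way to finish it: in the bridge tree $T$, let $b_i$ be the block carrying leg $i$ and let $m$ be the median of $b_1,b_2,b_3$. Since removing $e_i$ must separate $b_i$ from \emph{both} $b_j$ and $b_k$, the edge $e_i$ lies on both paths $b_i\!-\!b_j$ and $b_i\!-\!b_k$, hence on their common segment $b_i\!-\!m$; thus $m\notin S_i$. If $S_i\cap S_j\neq\emptyset$, then $S_i\cup S_j$ is a connected subtree containing $b_i$ and $b_j$ but not $m$, contradicting that the unique $b_i\!-\!b_j$ path passes through $m$. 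With this in hand, genus monotonicity (the cycle space of a subgraph injects, and vertex genera only drop) gives $3=\sum g(G_i^+)\le g(G)=2$, the desired contradiction.
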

\begin{proof}
The cases $n=0$ and $n=1$ are both covered by Lemma~\ref{lem:n=0or1}.  If $n=2$, we computed the boundary complex in Example~\ref{ex:M22}, and we can see at a glance that it is a flag complex.

Suppose, then, that $n \geq 3$.  For each $i \in \{1,\ldots, n\}$, let $D_i$ be the boundary divisor associated to the dual graph

\begin{center}
\begin{tikzpicture}[scale=0.75]
\draw[very thick]  (0,0) circle (0.5);
\node at (0,0) {$ 1$};

\draw[very thick] (0.50,0) -- (1,0);

\draw[very thick] (1.5,0) circle (0.50);
\node at (1.5,0) {$1$};

\draw[very thick] (-0.35,0.35) -- (-0.6,0.6);
\draw[very thick] (-0.5,0) -- (-0.75,0);
\draw[very thick] (-0.35,-0.35) -- (-0.6,-0.6);

\draw[very thick] (2,0) -- (2.25,0);
\node at (2.4,0) {$i$};
\end{tikzpicture}
\end{center}

\noindent with the left-hand component containing all marked points except $i$.  Then $D_i \cap D_j$ has dual graph
\begin{equation}
\label{eq:Dij}
\begin{tikzpicture}[scale=0.75]
\draw[very thick]  (-1.5,0) circle (0.50);
\node at (-1.5,0) {$ 1$};

\draw[very thick]  (0,0) circle (0.50);
\node at (0,0) {$0$};

\draw[very thick] (0.50,0) -- (1,0);
\draw[very thick] (-0.50,0) -- (-1,0);

\draw[very thick] (1.5,0) circle (0.50);
\node at (1.5,0) {$1$};

\draw[very thick] (-2,0) -- (-2.25,0);
\node at (-2.4,0) {$i$};

\draw[very thick] (0-0.35,0+0.35) -- (0-0.6,0+0.6);
\draw[very thick] (0,0.5) -- (0,0.75);
\draw[very thick] (0+0.35,0+0.35) -- (0+0.6,0+0.6);

\draw[very thick] (2,0) -- (2.25,0);
\node at (2.4,0) {$j$};
\end{tikzpicture}
\end{equation}
so it is in particular nonempty.

We claim, however, that
\[D_1 \cap \cdots \cap D_n = \emptyset.\]
To see this, note that if $D_1 \cap \cdots \cap D_n$ were nonempty, then it would have codimension $n$, and hence there would exist a dual graph $G$ with $n$ edges that is a degeneration of the dual graphs of $D_i$ for each $i$.  This, in particular, means that $G$ is a degeneration of the graph in \eqref{eq:Dij} for any $i \neq j$, which means that marked points $i$ and $j$ cannot lie on the same vertex or on adjacent vertices of $G$.

Given that $G$ has $n$ vertices, it follows from Euler's formula that
\[|V(G)| + |F(G)|=n+1,\]
where $V(G)$ and $F(G)$ denote the sets of vertices and faces of $G$, respectively.  If $|F(G)| \geq 1$, this implies that $|V(G)| \leq n$, in which case it is impossible to distribute the $n$ marked points without some pair of marked points lying on the same vertex.  The only possibility, then, is that $|F(G)| = 0$, in which case $|V(G)| = n+1$.  There is now exactly one way to distribute the $n$ marked points so that no two lie on adjacent vertices: the graph $G$ must have a ``pinwheel" shape, with one central unmarked vertex and $n$ other vertices each connected to it by an edge and each containing one marked point.  But since $g=2$ and $n \geq 3$, at least one of these exterior vertices must have genus zero, and it is therefore unstable.

We conclude that there is no graph that is a degeneration of the dual graphs of $D_i$ for all $i$, and hence the boundary complex of $\M_{2,n}$ is not a flag complex.
\end{proof}

The remaining cases of Theorem~\ref{thm:main}, in which $g \geq 3$, can be handled all together.  In these cases, aside from the small values of $n$ covered by Lemma~\ref{lem:n=0or1}, the boundary complex is never a flag complex.

\begin{lemma}
\label{lem:ggeq3}
Let $g \geq 3$.  Then the boundary complex of $\M_{g,n}$ is a flag complex if and only if $n \in \{0,1\}$.
\end{lemma}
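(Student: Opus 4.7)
The ``if'' direction is immediate from Lemma~\ref{lem:n=0or1}, so I focus on showing that for $g \geq 3$ and $n \geq 2$, the boundary complex of $\M_{g,n}$ is not a flag complex. The plan is to exhibit three boundary divisors $D_{12}, D_1, D_2$ whose pairwise intersections are nonempty but whose triple intersection is empty. I define $D_{12}$ to be the boundary divisor whose dual graph is a genus-$1$ vertex bearing marks $\{1,2\}$ joined by an edge to a genus-$(g-1)$ vertex bearing marks $\{3, \ldots, n\}$, and $D_i$ (for $i \in \{1,2\}$) to be the boundary divisor whose dual graph is a genus-$1$ vertex bearing only mark $i$ joined by an edge to a genus-$(g-1)$ vertex bearing the remaining $n-1$ marks.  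All three are stable dual graphs, since $g - 1 \geq 2$.

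To check that the pairwise intersections are nonempty, I would exhibit explicit common codimension-$2$ degenerations.  The linear three-vertex graph with genera $(1, 0, g-1)$ and marks $(\{1\}, \{2\}, \{3, \ldots, n\})$ is stable (the middle genus-$0$ vertex has $1 + 2 = 3$ special points), and smoothing its two edges yields $D_{12}$ and $D_1$ respectively; an analogous graph handles $D_{12} \cap D_2$.  For $D_1 \cap D_2$, I would use the linear graph with genera $(1, g-2, 1)$ and marks $(\{1\}, \{3, \ldots, n\}, \{2\})$, relying on $g \geq 3$ to ensure that the middle vertex has positive genus.

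The core of the argument is to prove $D_{12} \cap D_1 \cap D_2 = \emptyset$.  By the normal-crossings property of boundary divisors, any nonempty triple intersection would have pure codimension $3$, so it suffices to show that no $3$-edge dual graph $G$ degenerates to all three divisors simultaneously.  Since each $D_i$ has a two-vertex dual graph, the edge of $G$ corresponding to it must be a bridge (otherwise, smoothing everything but that edge would yield a single vertex with a self-loop rather than a two-vertex graph); hence $G$ has three bridges, and so $G$ is a tree on four vertices.  Writing $e_{12}, e_1, e_2$ for the edges of $G$ corresponding to $D_{12}, D_1, D_2$, the requirement that $e_1$ and $e_2$ both separate marks $1$ from $2$ (while $e_{12}$ does not) forces marks $1$ and $2$ to lie on non-adjacent vertices of $G$.

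The main obstacle is the final case analysis on $4$-vertex trees (linear or Y-shaped) with marks $1$ and $2$ placed at graph-distance at least $2$.  If the marks lie at distance $3$ in the linear tree, all three edges lie on the path between them, leaving no edge to serve as $e_{12}$; in the remaining configurations (linear with marks at distance $2$, or Y-shape with marks on two leaves), removing $e_{12}$ produces a ``marks-$\{1,2\}$ side'' that must bear only marks $1$ and $2$ and have total genus $1$.  However, stability of $\M_{g,n}$ forces every leaf with a single mark to have genus $\geq 1$ and every unmarked degree-$2$ internal vertex to have genus $\geq 1$; a direct inspection shows that in each remaining case the marks-$\{1,2\}$ side contains at least two such vertices, giving a total genus of at least $2$ and contradicting the requirement of genus $1$.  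This contradiction completes the proof.
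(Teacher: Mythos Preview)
Your proof is correct. The high-level strategy matches the paper's---exhibit three tree-type $(1,g-1)$ boundary divisors with nonempty pairwise but empty triple intersection---but both the specific triple and the emptiness argument differ.

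The paper chooses the divisors with mark sets $\{1,\ldots,n\}$, $\{2,\ldots,n\}$, and $\{1\}$ on the genus-$1$ side; you instead take the more symmetric choice $\{1,2\}$, $\{1\}$, $\{2\}$. For the empty triple intersection, the paper observes that the dual graph of one particular pairwise intersection has a rigid genus-$0$ valence-$3$ vertex (which cannot degenerate), and then argues that the structure forced by removing the vertex carrying mark $1$ is incompatible across the two remaining pairwise intersections. Your argument is more purely combinatorial: all three edges of a putative common degeneration must be bridges, so the graph is a tree on four vertices, and a short enumeration together with the stability bound $g(v)\geq 1$ for valence-$2$ unmarked or valence-$1$ singly-marked vertices forces the $\{1,2\}$-side to have genus at least $2$. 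Your approach makes the role of the hypothesis $g\geq 3$ very transparent (it is exactly what makes the $(1,g-2,1)$ chain witnessing $D_1\cap D_2$ stable when $n=2$), and it avoids the somewhat delicate ``remove a vertex and compare components'' reasoning. The paper's argument, on the other hand, is quicker once the rigid-vertex observation is in hand and does not require enumerating tree shapes.
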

\begin{proof}
In light of Lemma~\ref{lem:n=0or1}, it suffices to prove that the boundary complex of $\M_{g,n}$ is never a flag complex when $g \geq 3$ and $n \geq 2$.  To do so, let $D_1$, $D_2$, and $D_3$ be the boundary divisors specified by the following three dual graphs:

\begin{center}
\begin{minipage}{0.3\linewidth}
\begin{tikzpicture}
\draw[very thick]  (0,0) circle (0.525);
\node at (0,0) {$ g-1$};

\draw[very thick] (0.50,0) -- (1,0);

\draw[very thick] (1.5,0) circle (0.50);
\node at (1.5,0) {$1$};

\draw[very thick] (1.5+0.35,0+0.35) -- (1.5+0.6,0+0.6);
\node at (1.5+0.9,0+0.7){$1$};
\draw[very thick] (2,0) -- (2.25,0);
\node at (2.4,0) {$\vdots$};
\draw[very thick] (1.5+0.35,0-0.35) -- (1.5+0.6, 0-0.6);
\node at (1.5+0.9, 0-0.7) {$n$};

\node at (0.5,-1.5){$D_1$};
\end{tikzpicture}
\end{minipage}
\begin{minipage}{0.3\linewidth}
\begin{tikzpicture}
\draw[very thick]  (0,0) circle (0.525);
\node at (0,0) {$ g-1$};

\draw[very thick] (0.50,0) -- (1,0);

\draw[very thick] (1.5,0) circle (0.50);
\node at (1.5,0) {$1$};

\draw[very thick] (-0.5,0) -- (-0.75,0);
\node at (-1,0) {$1$};

\draw[very thick] (1.5+0.35,0+0.35) -- (1.5+0.6,0+0.6);
\node at (1.5+0.9,0+0.7){$2$};
\draw[very thick] (2,0) -- (2.25,0);
\node at (2.4,0.1) {$\vdots$};
\draw[very thick] (1.5+0.35,0-0.35) -- (1.5+0.6, 0-0.6);
\node at (1.5+0.9, 0-0.7) {$n$};

\node at (0.5,-1.5){$D_2$};
\end{tikzpicture}
\end{minipage}
\begin{minipage}{0.3\linewidth}
\begin{tikzpicture}
\draw[very thick]  (0,0) circle (0.525);
\node at (0,0) {$ g-1$};

\draw[very thick] (0.50,0) -- (1,0);

\draw[very thick] (1.5,0) circle (0.50);
\node at (1.5,0) {$1$};

\draw[very thick] (-0.35,0.35) -- (-0.6,0.6);
\node at (-0.9,0.7) {$2$};
\draw[very thick] (-0.5,0) -- (-0.75,0);
\node at (-0.9,0.1) {$\vdots$};
\draw[very thick] (-0.35,-0.35) -- (-0.6,-0.6);
\node at (-0.9,-0.7) {$n$};

\draw[very thick] (2,0) -- (2.25,0);
\node at (2.4,0) {$1$};

\node at (0.5,-1.5){$D_3$};
\end{tikzpicture}
\end{minipage}
\end{center}

\noindent One can check that the pairwise intersections have dual graphs
\begin{center}
\begin{minipage}{0.32\linewidth}
\begin{tikzpicture}
\draw[very thick]  (-1.5,0) circle (0.525);
\node at (-1.5,0) {$ g-1$};

\draw[very thick]  (0,0) circle (0.50);
\node at (0,0) {$0$};

\draw[very thick] (0.50,0) -- (1,0);
\draw[very thick] (-0.50,0) -- (-1,0);

\draw[very thick] (1.5,0) circle (0.50);
\node at (1.5,0) {$1$};

\draw[very thick] (0,0.5) -- (0,0.75);
\node at (0,1){$1$};

\draw[very thick] (1.5+0.35,0+0.35) -- (1.5+0.6,0+0.6);
\node at (1.5+0.9,0+0.7){$2$};
\draw[very thick] (2,0) -- (2.25,0);
\node at (2.4,0.1) {$\vdots$};
\draw[very thick] (1.5+0.35,0-0.35) -- (1.5+0.6, 0-0.6);
\node at (1.5+0.9, 0-0.7) {$n$};

\node at (0,-1.5){$D_1 \cap D_2$};
\end{tikzpicture}
\end{minipage}
\begin{minipage}{0.32\linewidth}
\begin{tikzpicture}
\draw[very thick]  (-1.5,0) circle (0.5250);
\node at (-1.5,0) {$ g-1$};

\draw[very thick]  (0,0) circle (0.50);
\node at (0,0) {$0$};

\draw[very thick] (0.50,0) -- (1,0);
\draw[very thick] (-0.50,0) -- (-1,0);

\draw[very thick] (1.5,0) circle (0.50);
\node at (1.5,0) {$1$};

\draw[very thick] (0-0.35,0+0.35) -- (0-0.6,0+0.6);
\node at (0-0.7, 0+0.9){$2$};
\draw[very thick] (0,0.5) -- (0,0.75);
\node at (0,1){$\cdots$};
\draw[very thick] (0+0.35,0+0.35) -- (0+0.6,0+0.6);
\node at (0+0.7, 0+0.9){$n$};

\draw[very thick] (2,0) -- (2.25,0);
\node at (2.4,0) {$1$};

\node at (0,-1.5){$D_1 \cap D_3$};
\end{tikzpicture}
\end{minipage}
\begin{minipage}{0.3\linewidth}
\begin{tikzpicture}
\draw[very thick]  (-1.5,0) circle (0.50);
\node at (-1.5,0) {$1$};

\draw[very thick]  (0,0) circle (0.525);
\node at (0,0) {$g-2$};

\draw[very thick] (1.5,0) circle (0.5);
\node at (1.5,0) {$1$};

\draw[very thick] (0.50,0) -- (1,0);
\draw[very thick] (-0.50,0) -- (-1,0);

\draw[very thick] (-1.5-0.35,0+0.35) -- (-1.5-0.6,0+0.6);
\node at (-1.5-0.9,0+0.7){$2$};
\draw[very thick] (-2,0) -- (-2.25,0);
\node at (-2.4,0.1) {$\vdots$};
\draw[very thick] (-1.5-0.35,0-0.35) -- (-1.5-0.6, 0-0.6);
\node at (-1.5-0.9, 0-0.7) {$n$};

\draw[very thick] (2,0) -- (2.25,0);
\node at (2.4,0) {$1$};

\node at (0,-1.5){$D_2 \cap D_3$};
\end{tikzpicture}
\end{minipage}
\end{center}
\noindent which confirms that all three pairwise intersections are nonempty.  However, we claim that
\[D_1 \cap D_2 \cap D_3 = \emptyset,\]
which implies that the boundary complex of $\M_{g,n}$ is not a flag complex.

To check this, note that since the middle vertex of the dual graph for $D_1 \cap D_2$ has genus zero and three special points, it cannot degenerate.  Therefore, in any degeneration of that graph, removing the vertex containing marked point $1$ yields a disjoint union of two graphs, one of genus $g-1$ containing no marked points and one of genus $1$ containing marked points $2, \ldots, n$.  This is never true of a degeneration of the dual graph for $D_1 \cap D_3$: in that case, removing the vertex with marked point $1$ either produces a connected graph, or it produces a disjoint union in which one subgraph has genus $g-1$ and contains marked points $2, \ldots, n$.

Thus, no dual graph can be simultaneously a degeneration of the dual graph of $D_1 \cap D_2$ and of $D_1 \cap D_3$, so the triple intersection is empty.
\end{proof}

Combining Lemmas~\ref{lem:g=1}, ~\ref{lem:n=0or1},~\ref{lem:g=2}, and~\ref{lem:ggeq3}, the proof of Theorem~\ref{thm:main} is complete.

\bibliographystyle{abbrv}
\bibliography{biblio}

\end{document}